\documentclass[preprint,10pt]{article}
\usepackage{amsmath,amssymb,amsthm,latexsym}
\usepackage{graphicx,xcolor,tikz}

\renewenvironment{proof}{\noindent{\bf Proof.}}{~~$\Box$}
\theoremstyle{plain}
\newtheorem{thrm}{Theorem}[section]
\newtheorem{lemm}[thrm]{Lemma}
\newtheorem{prot}[thrm]{Proposition}

\newtheorem{corl}[thrm]{Corollary}

\theoremstyle{definition}
\newtheorem{dfnt}[thrm]{Definition}
\newtheorem{remk}[thrm]{Remark}
\newtheorem{exa}[thrm]{Example}


\begin{document}
\title{\bf A Radon-Nikodym theorem for monotone measures
}
\author{
Yao Ouyang\thanks{E-mail:oyy@zjhu.edu.cn(Y.
Ouyang)}\\
\small\it Faculty of Science, Huzhou Teacher's
College, Huzhou, Zhejiang 313000, \\ \small\it People's Republic of China \\
Jun Li\thanks{Corresponding author. E-mail:lijun@cuc.edu.cn(J. Li)}\\
\small\it State Key Laboratory of Media Convergence and Communication, \\ \small\it Communication University of China,
            Beijing 100024, China
}

\date{}
\maketitle
\begin{abstract}
\hspace{4mm}
A version of Radon-Nikodym theorem for the Choquet integral w.r.t. monotone measures is proved.
Without any presumptive condition, we obtain a necessary and sufficient condition for the ordered pair $(\mu, \nu)$ of finite monotone measures to have
the so-called Radon-Nikodym property related to a nonnegative measurable function $f$.
If $\nu$ is null-continuous and weakly null-additive, then $f$ is uniquely determined almost everywhere by $\nu$ and
thus is called the Radon-Nikodym derivative of $\mu$ w.r.t. $\nu$.
For $\sigma$-finite monotone measures, a Radon-Nikodym type theorem is also obtained under the assumption that
the monotone measures are lower continuous and null-additive.

{\it Keywords:}Radon-Nikodym theorem; Monotone measure; Choquet integral; lower continuous; null-additive
\end{abstract}

\section{Introduction}
Suppose that $\nu$ is a $\sigma$-additive measure and $f$ is a nonnegative integrable function. The measure $\mu$ defined by
\[
\mu(A)=\int_A fd\nu
\]
for all measurable sets $A$ is said to be the indefinite integral of $f$ w.r.t. $\nu$. In this case, $\mu$ is absolutely continuous w.r.t. $\nu$.
Under what conditions a measure can be expressed as the indefinite integral w.r.t. another measure is quite interesting. This pertains to the scope of the Radon-Nikodym theorem.
Radon-Nikodym theorem, one of the most important theorems in measure theory, states that $\mu$ is the indefinite integral w.r.t. $\nu$ if and only if $\mu$ is absolutely continuous w.r.t. $\nu$, see Halmos \cite{Hal68} for example. We note that the Radon-Nikodym theorem has various proofs and all these proofs are highly dependent on the $\sigma$-additivity of measures.

When one of the measures is only finitely additive, the Radon-Nikodym theorem does not hold in general. Since in this case, the Hahn decomposition does not hold in general and the implications ``$\nu(A)=0\Rightarrow\mu(A)=0$'' and ``$\nu(A_n)\to 0\Rightarrow\mu(A_n)\to 0$'' are not equivalent. Various conditions \cite{BelHag88,CanMar92,May79} have been derived in the literature for the validity of finitely additive measures-based Radon-Nikodym theorem. For example, in \cite{BelHag88} the Radon-Nikodym theorem was proved under absolute continuity and a property called Hahn separation (a variant of Hahn decomposition).

Graf \cite{Gra80} proved a Radon-Nikodym theorem for the Choquet integral w.r.t. capacities (lower continuous subadditive monotone measure), while Nguyen et al. \cite{Ngu06,NguNguWan97} investigated a Radon-Nikodym theorem for $\sigma$-subadditive monotone measures. Greco \cite{Gre81} (see also \cite{CanVol02}) obtained necessary and sufficient conditions of this theorem for null-additive monotone measures. Roughly speaking, these conditions include a variant of Hahn decomposition and some other conditions. We also note that R\'{e}bill\'{e} \cite{Reb13} discussed the superior Radon-Nikodym derivative of a set function w.r.t. a $\sigma$-additive measure.

In this paper, a new version of Radon-Nikodym theorem for the Choquet integral is proved. It should be stressed that our result generalizes the corresponding ones in \cite{Gra80,Gre81,Ngu06}.
Concretely, we introduce the concept of decomposition property of monotone measures in Section 3.
This property concerns an ordered pair $(\mu, \nu)$ of monotone measures and a decreasing family $\{A_\alpha\}_{\alpha\in\mathbb{Q}_+}$ of measurable sets
and is a natural generalization of the Hahn decomposition for $\sigma$-additive measures.
The decomposition property together with $\lim\limits_{\alpha\to\infty}\mu(A_\alpha)\vee\nu(A_\alpha)=0$ is demonstrated to be
the necessary and sufficient conditions for $(\mu, \nu)$ to have Radon-Nikodym property
 based on the Choquet integral (i.e., there is a nonnegative measurable function $f$ such that $\mu(E)=\int_E fd\nu$ for each measurable set $E$), where $\mu, \nu$ are finite monotone measures.
This result is obtained without any presumptive condition other than the monotonicity of set functions,
thus it is a generalization of the results of Greco \cite{Gre81} and Nguyen et al. \cite{NguNguWan97}.
When $\nu$ is further weakly null-additive (which is weaker than subadditive) and null-continuous (which is implied by lower continuous),
then the function $f$ is unique a.e.[$\nu$] and is called the Radon-Nikodym derivative of $\mu$ w.r.t. $\nu$. Thus, Graf's result is also generalized.
The Radon-Nikodym theorem for $\sigma$-finite monotone measures are considered in Section 4.
The existence and uniqueness of the Radon-Nikodym derivative is obtained when $\mu, \nu$ are lower continuous and $\nu$ is further null-additive.

\section{Preliminaries}\label{Sec-Preliminaries}
Let $(U, {\mathcal U})$ denote a measurable space, that is, a nonempty set $U$ equipped with a $\sigma$-algebra ${\mathcal U}$ of subsets of $U$. A subset $A$ of $U$ is called measurable (w.r.t. ${\mathcal U}$) if $A\in{\mathcal U}$. A nonnegative extended real-valued function $f\colon U\to \overline{\mathbb{R}}_{+}$ is called measurable if for each $\alpha\in [0, +\infty]$, $\{f\geq\alpha\}\in {\mathcal U}$ (here $\{f\geq\alpha\}$  is the abbreviation for $\{t\in U\, | \, f(t)\geq\alpha\}$).

\begin{dfnt}
A set function $\mu: {\mathcal U}\rightarrow \overline{\mathbb{R}}_{+}$ is called a {\it monotone measure} if it satisfies the following two conditions:

{\rm (i)} $\mu(\emptyset)=0$;            \hfill  (vanishing at $\emptyset$)

{\rm (ii)} $\mu(A) \leq \mu(B)$ whenever $A \subset B$ and $A, B \in {\mathcal{U}}$. \hfill  (monotonicity)

The triple $(U, {\mathcal U}, \mu)$ is called a \emph{monotone measure space}.

A monotone measure $\mu$ on $(U, {\mathcal U})$ is said to be (i) \emph{finite} if $\mu(U)<\infty$; (ii) \emph{$\sigma$-finite} if there is $\{U_n\}_{n=1}^\infty\subset{\mathcal U}$ with $U_n\nearrow U$ (\emph{i.e.}, $U_1\subset U_2\subset\cdots\subset U_n\subset\cdots$ and $\bigcup\limits_{n=1}^\infty U_n=U$) such that $\mu(U_n)<\infty$ for each $n$.
\end{dfnt}

Let $f$ be a nonnegative measurable function, 
$\nu$ be a monotone measure 
and $A$ be a measurable set. 
The Choquet integral of $f$ w.r.t. $\nu$ is defined as follows, see \cite{Ch53,Den94,Pap95}.
\begin{dfnt}
The Choquet integral of $f$ w.r.t. $\nu$ on $A$ is given by
\[
\int_{A} fd\nu=\int_0^\infty \nu(\{f\geq\alpha\}\cap A)d\alpha,
\]
where the integral on the right side is the improper Riemann integral.
\end{dfnt}

When $A=U$, we write $\int fd\nu$ instead of $\int_{U} fd\nu$. If $\int fd\nu<\infty$, then $f$ is called Choquet integrable w.r.t. $\nu$ on $U$.
When $\nu$ is a $\sigma$-additive measure, the Choquet integral coincides with the Lebesgue
integral.

Throughout this paper, unless otherwise stated, all the considered integrals are assumed to be the Choquet integrals.
The following are some basic properties of the Choquet integrals (\cite{Den94,Pap95,WanKli09}\label{Prot-BasicPropofChoInt}):

\begin{prot}
Let $(U, {\mathcal U}, \nu)$ be a monotone measure space and $f, g$ be nonnegative measurable functions. Then

{\rm (i)} $\int_A fd\nu=0$ whenever $\nu(A)=0$;

{\rm(ii)} $f\leq g$ implies $\int fd\nu\leq\int gd\nu$; \hfill  \hfill  (monotonicity)


{\rm(iii)} $\int cfd\nu=c\int fd\nu$ for any constant $c\geq 0$; \hfill  (homogeneity)

{\rm (iv)} $\int_A\chi_A d\nu=\nu(A), \forall\, A\in{\mathcal U}$, where
 $\chi\sb{A}$ denotes the characteristic function of $A$;

{\rm (v)} $\int_A f d\nu=\int f\chi_A d\nu$;



{\rm (vi)} $\int_A f d\nu=\lim\limits_{n\to\infty}\int_A (f\wedge n)d\nu$.
\end{prot}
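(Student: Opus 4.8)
The plan is to derive every item directly from the defining formula
$\int_A f\,d\nu=\int_0^\infty \nu(\{f\geq\alpha\}\cap A)\,d\alpha$,
invoking only the monotonicity of $\nu$ and elementary properties of the improper Riemann integral. Fixing a nonnegative measurable $f$ and $A\in{\mathcal U}$, I would set $G_{f,A}(\alpha):=\nu(\{f\geq\alpha\}\cap A)$ and first record the structural fact that underlies everything: since $\{f\geq\alpha\}\cap A$ decreases as $\alpha$ increases, $G_{f,A}$ is a non-increasing nonnegative function of $\alpha\in[0,\infty)$; hence it is Riemann integrable on each bounded interval on which it is finite, the quantity $\int_0^n G_{f,A}(\alpha)\,d\alpha$ is non-decreasing in $n$, and $\int_0^\infty G_{f,A}(\alpha)\,d\alpha=\lim_{n\to\infty}\int_0^n G_{f,A}(\alpha)\,d\alpha$ is a well-defined element of $[0,+\infty]$. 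Throughout I use the conventions $c\cdot(+\infty)=+\infty$ for $c>0$ and $0\cdot(+\infty)=0$.

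Next I would dispatch (i), (ii), (iv) and (v), each of which reduces to an inclusion or identity among superlevel sets. For (i): $\{f\geq\alpha\}\cap A\subset A$ forces $0\leq G_{f,A}(\alpha)\leq\nu(A)=0$ for every $\alpha$, so the integrand vanishes identically. For (ii): $f\leq g$ gives $\{f\geq\alpha\}\subset\{g\geq\alpha\}$, hence $G_{f,U}\leq G_{g,U}$ pointwise, and integrating over $[0,n]$ and letting $n\to\infty$ yields the inequality. For (iv): the superlevel sets of $\chi_A$ satisfy $\{\chi_A\geq\alpha\}\cap A=A$ for $0<\alpha\leq1$ and $\{\chi_A\geq\alpha\}\cap A=\emptyset$ for $\alpha>1$, so $\int_A\chi_A\,d\nu=\int_0^1\nu(A)\,d\alpha=\nu(A)$. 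For (v): $\{f\chi_A\geq\alpha\}=\{f\geq\alpha\}\cap A$ for every $\alpha>0$ (the value at $\alpha=0$ being $U$, which does not affect the integral over $[0,\infty)$), whence $\int f\chi_A\,d\nu=\int_A f\,d\nu$.

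Then (iii) and (vi) require one additional elementary fact about improper Riemann integrals each. For (iii) with $c=0$ both sides are $0$; for $c>0$, from $\{cf\geq\alpha\}=\{f\geq\alpha/c\}$ I would apply the change of variables $\beta=\alpha/c$ to get $\int cf\,d\nu=\int_0^\infty\nu(\{f\geq\alpha/c\})\,d\alpha=c\int_0^\infty\nu(\{f\geq\beta\})\,d\beta=c\int f\,d\nu$, the identity remaining valid when the integral is infinite. For (vi): $\{f\wedge n\geq\alpha\}=\{f\geq\alpha\}$ for $\alpha\leq n$ and $\{f\wedge n\geq\alpha\}=\emptyset$ for $\alpha>n$, so $\int_A(f\wedge n)\,d\nu=\int_0^n G_{f,A}(\alpha)\,d\alpha$; since $G_{f,A}\geq0$ this is non-decreasing in $n$ and its limit as $n\to\infty$ is $\int_0^\infty G_{f,A}(\alpha)\,d\alpha=\int_A f\,d\nu$, which is precisely the asserted identity.

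I do not anticipate any real obstacle: the content is entirely routine once the defining formula is unwound, and the only points needing a little care are the bookkeeping at the endpoint $\alpha=0$, the possibility that some of the integrals equal $+\infty$ (handled by the monotone-limit definition of the improper Riemann integral of a non-increasing nonnegative function), and the legitimacy of the substitution in (iii) for improper integrals. This is why such facts are routinely stated without proof; the proposal above merely records the reductions cleanly.
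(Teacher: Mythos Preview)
Your proposal is correct and, for part (vi) --- the only item the paper actually proves --- it is essentially the same argument: identify $\{f\wedge n\geq\alpha\}$ with $\{f\geq\alpha\}$ for $\alpha\leq n$ and with $\emptyset$ for $\alpha>n$, so that $\int_A(f\wedge n)\,d\nu=\int_0^n G_{f,A}(\alpha)\,d\alpha$, and then invoke the definition of the improper Riemann integral as $\lim_{n\to\infty}\int_0^n$. The paper simply cites references for (i)--(v), so your additional reductions for those parts go beyond what the paper supplies but are routine and correct.
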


\begin{proof}
We only give the proof of (vi). For any $A\in{\mathcal U}$,
\begin{eqnarray*}
\int_A f d\nu
&=& \int_0^\infty\nu(A\cap\{f\geq \alpha\})d\alpha=\lim_{n\to\infty}\int_0^n\nu(A\cap\{f\geq \alpha\})d\alpha\\
&=& \lim_{n\to\infty}\int_0^n\nu(A\cap\{f\wedge n\geq \alpha\})d\alpha\\
&=& \lim_{n\to\infty}\left(\int_0^n\nu(A\cap\{f\wedge n\geq \alpha\})d\alpha+\int_n^\infty\nu(A\cap\{f\wedge n\geq \alpha\})d\alpha\right)\\
&=&\lim_{n\to\infty}\int_0^\infty\nu(A\cap\{f\wedge n\geq \alpha\})d\alpha= \lim_{n\to\infty}\int_A (f\wedge n)d\nu.
\end{eqnarray*}
\end{proof}

Two functions $f, g$ on $U$ are said to be \emph{comonotone} if for any $t_1, t_2\in U$, $(f(t_1)- f(t_2))(g(t_1)-
g(t_2))\geq 0$.
The following proposition is known as \emph{comonotonic additivity}  of Choquet integral, which is a distinguishing feature of the Choquet integral, see \cite{Den94,Sch86}.

\begin{prot}\label{comonotone-ChoInt}
Let $(U, {\mathcal U}, \nu)$ be a monotone measure space and $f, g$ be nonnegative measurable functions. If $f$ and $g$ are comonotone, then
\[
\int(f+g)d\nu=\int fd\nu+\int gd\nu.
\]
\end{prot}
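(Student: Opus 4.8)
The plan is to reduce the identity first to nonnegative \emph{simple} comonotone functions, where comonotonicity imposes a rigid ``layered'' structure, and then to reach arbitrary $f,g$ by uniform dyadic approximation followed by truncation, \emph{without} invoking any continuity property of $\nu$ (none is assumed, $\nu$ being merely monotone). We may assume $\int f\,d\nu<\infty$ and $\int g\,d\nu<\infty$, for otherwise $f\le f+g$ (or $g\le f+g$) and monotonicity of the Choquet integral give $\int(f+g)\,d\nu=\infty$, so both sides equal $+\infty$. The structural fact is that $f$ and $g$ are both nondecreasing functions of $h:=f+g$: if $h(t_1)=h(t_2)$ then $f(t_1)-f(t_2)=-(g(t_1)-g(t_2))$, so comonotonicity yields $-(f(t_1)-f(t_2))^2\ge0$, whence $f(t_1)=f(t_2)$; and if $h(t_1)\le h(t_2)$ but $f(t_1)>f(t_2)$, then comonotonicity forces $g(t_1)\ge g(t_2)$ and hence $h(t_1)>h(t_2)$, a contradiction. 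Thus $f=u\circ h$ and $g=v\circ h$ for nondecreasing $u,v$ on the range of $h$ with $u+v$ the identity there; in particular, for every $\alpha>0$ the set $\{f\ge\alpha\}$ (and likewise $\{g\ge\alpha\}$) coincides with some level set $\{h\ge t\}$, so the level sets of $f$, of $g$ and of $h$ all lie in a single chain.

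Suppose next that $f,g$ (hence $h$) are simple. For any nonnegative simple function $\varphi$ with level sets $D_1\supseteq\cdots\supseteq D_r$ at positive thresholds $d_1<\cdots<d_r$ one has $\int\varphi\,d\nu=\sum_{i=1}^r(d_i-d_{i-1})\,\nu(D_i)$ with $d_0:=0$, by integrating the step function $\alpha\mapsto\nu(\{\varphi\ge\alpha\})$. Apply this to $h$, whose distinct positive values are $b_1<\cdots<b_m$ and whose level sets are $E_k:=\{h\ge b_k\}$. Since $\{f\ge\alpha\}$ and $\{g\ge\alpha\}$ are themselves among the $E_k$, the same formula writes $\int f\,d\nu$ and $\int g\,d\nu$ as $\sum_k\big(u(b_k)-u(b_{k-1})\big)\nu(E_k)$ and $\sum_k\big(v(b_k)-v(b_{k-1})\big)\nu(E_k)$ respectively (with $u(b_0)=v(b_0)=0$); adding them and using $u+v=\mathrm{id}$ to collapse each pair of coefficients to $b_k-b_{k-1}$ gives $\int f\,d\nu+\int g\,d\nu=\sum_k(b_k-b_{k-1})\nu(E_k)=\int h\,d\nu$.

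For bounded comonotone $f,g$, take $f_n:=2^{-n}\lfloor 2^n f\rfloor$ and $g_n:=2^{-n}\lfloor 2^n g\rfloor$: these are simple, still comonotone (each is a nondecreasing function of $f$, resp.\ $g$), and $f_n\le f$, $g_n\le g$, $\|f-f_n\|_\infty,\|g-g_n\|_\infty\le2^{-n}$. From $\{f\ge\alpha+2^{-n}\}\subseteq\{f_n\ge\alpha\}\subseteq\{f\ge\alpha\}$ and the monotonicity of $\nu$, integration in $\alpha$ yields $\int f\,d\nu-\int_0^{2^{-n}}\nu(\{f\ge\beta\})\,d\beta\le\int f_n\,d\nu\le\int f\,d\nu$, and since the left-most term tends to $\int f\,d\nu$ (a tail of the convergent integral $\int_0^\infty\nu(\{f\ge\beta\})\,d\beta$) we obtain $\int f_n\,d\nu\to\int f\,d\nu$; the same squeeze applied to $f_n+g_n\le f+g$ gives $\int(f_n+g_n)\,d\nu\to\int(f+g)\,d\nu$, and combining with the simple case proves the identity for bounded $f,g$. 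Finally, for arbitrary $f,g$, apply this to the bounded comonotone pair $f\wedge n$, $g\wedge n$ and pass to the limit in $\int\big((f\wedge n)+(g\wedge n)\big)\,d\nu=\int(f\wedge n)\,d\nu+\int(g\wedge n)\,d\nu$: the right side tends to $\int f\,d\nu+\int g\,d\nu$ by the truncation property $\int\psi\,d\nu=\lim_n\int(\psi\wedge n)\,d\nu$, while the pointwise inequalities $(f+g)\wedge n\le(f\wedge n)+(g\wedge n)\le(f+g)\wedge 2n$ together with the same truncation property pin the left side to $\int(f+g)\,d\nu$.

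The one genuinely delicate point is this limiting step: because $\nu$ is only monotone, there is no monotone- or dominated-convergence theorem for the Choquet integral to lean on. What replaces it are the inclusion sandwich $\{f\ge\alpha+\varepsilon\}\subseteq\{f_\varepsilon\ge\alpha\}\subseteq\{f\ge\alpha\}$ for uniform approximations (which needs only the monotonicity of $\nu$) and the elementary inequality $(f+g)\wedge n\le(f\wedge n)+(g\wedge n)\le(f+g)\wedge 2n$ (a short case check according as $f+g\le n$ or $f+g>n$, and which of $f,g$ is the larger), which lets the truncation property carry the unbounded case.
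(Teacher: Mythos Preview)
The paper does not supply a proof of this proposition; it merely quotes the result as known and refers to Denneberg \cite{Den94} and Schmeidler \cite{Sch86}. There is therefore no in-paper argument to compare against.

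Your proof is correct and follows the standard route (as in \cite{Den94}): first the structural lemma that comonotone $f,g$ are each nondecreasing functions of $h=f+g$, so that all level sets of $f$, $g$, $h$ lie in a single chain; then the identity for simple functions via the layer-cake formula; then uniform dyadic approximation for bounded $f,g$; and finally truncation $f\wedge n$, $g\wedge n$ for the general case. The care you take to avoid any continuity hypothesis on $\nu$---replacing monotone convergence by the sandwich $\{f\ge\alpha+2^{-n}\}\subseteq\{f_n\ge\alpha\}\subseteq\{f\ge\alpha\}$ and by the pointwise bounds $(f+g)\wedge n\le(f\wedge n)+(g\wedge n)\le(f+g)\wedge 2n$ together with Proposition~\ref{Prot-BasicPropofChoInt}(vi)---is exactly what the setting requires.

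Two small remarks. First, you write ``a tail of the convergent integral'' where you mean the \emph{head}: $\int_0^{2^{-n}}\nu(\{f\ge\beta\})\,d\beta\to0$ because it is the integral of an integrable function over a shrinking initial interval. Second, the claim that every $\{f\ge\alpha\}$ \emph{equals} some $\{h\ge t\}$ is slightly too strong in general (it could also be of the form $\{h>t\}$); however, in the simple case---the only place you actually use this identification---the range of $h$ is finite and the statement holds as written, so the argument is unaffected.
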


Note that
two increasing (decreasing, resp.) functions are comonotone, and a constant function $c$ is comonotone with arbitrary functions.
Moreover, for any function $f$ and any constant $c$, $(f-c)\vee 0$ and $f\wedge c$ are comonotone, where $(f\vee c)(t)=\max\{f(t), c\}$ and $(f\wedge c)(t)=\min\{f(t), c\}$.
\section{Radon-Nikodym theorem for finite monotone measures}
In this section we present a new version of Radon-Nikodym theorem for finite monotone measures.
To do this, we introduce the following concept of {\it decomposition property} relating to an ordered pair of monotone measures.
\begin{dfnt}\label{Dfnt-decompro}
Let $\mu, \nu$ be two monotone measures on $(U, {\mathcal U})$. The ordered pair $(\mu, \nu)$ is said to have \emph{decomposition property} if there is a decreasing family $\{A_\alpha\}_{\alpha\in\mathbb{Q}^{+}}$ of measurable sets  with $A_0=U$ such that
\begin{eqnarray}\label{decompro-inequality-1}
\alpha\Big(\nu(A\cap A_\alpha)-\nu(A\cap A_\beta)\Big)
&\leq&  \mu(A\cap A_\alpha)-\mu(A\cap A_\beta) \\ \label{decompro-inequality-2}
&\leq&  \beta\Big(\nu(A\cap A_\alpha)-\nu(A\cap A_\beta)\Big)
\end{eqnarray}
holds for any $A\in{\mathcal U}$ with finite measures for $\mu$ and $\nu$, and
any $\alpha, \beta\in \mathbb{Q}^{+}$ with $\alpha<\beta$, where $\mathbb{Q}^{+}$ is the set of all nonnegative rational numbers.
\end{dfnt}

\begin{exa}
For any $\sigma$-additive finite measures $\mu, \nu$,
the ordered pair $(\mu, \nu)$ has decomposition property w.r.t. $\{A_\alpha\}_{\alpha\in \mathbb{Q}^{+}}$, where
$(A_\alpha, A^{\rm c}_\alpha)$ is a Hahn decomposition of the signed measure $\mu-\alpha\nu$ (see Remark \ref{Nte-sigmaaddmeasure} for detail).
This is why we call the ordered pair $(\mu, \nu)$ having decomposition property if $\mu, \nu$ satisfy inequalities (\ref{decompro-inequality-1}) and (\ref{decompro-inequality-2}).
\end{exa}

\begin{lemm}\label{Lemm-lemma4-2}
Let $(\mu, \nu)$ have decomposition property w.r.t. $\{A_\alpha\}_{\alpha\in \mathbb{Q}^{+}}$. If $\lim\limits_{\alpha\to\infty}\mu(A_\alpha)\vee\nu(A_\alpha)=0$, then $\lim\limits_{\alpha\to\infty}\alpha\nu(A_\alpha)=0$.
\end{lemm}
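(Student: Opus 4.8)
The plan is to squeeze $\alpha\nu(A_\alpha)$ between $0$ and a quantity that tends to $0$, using only the lower half (\ref{decompro-inequality-1}) of the decomposition inequality. First I would pick a legitimate set on which (\ref{decompro-inequality-1}) may be applied: since $\mu,\nu$ are finite one may simply take $A=U$, and in any case the hypothesis $\lim_{\alpha\to\infty}\mu(A_\alpha)\vee\nu(A_\alpha)=0$ guarantees $\mu(A_{\alpha_0})\vee\nu(A_{\alpha_0})<\infty$ for some $\alpha_0\in\mathbb{Q}^{+}$, so one can take $A=A_{\alpha_0}$. With such an $A$ fixed, for all $\alpha,\beta\in\mathbb{Q}^{+}$ with $\alpha<\beta$ (and, in the second case, $\alpha\ge\alpha_0$, so that $A\cap A_\alpha=A_\alpha$ and $A\cap A_\beta=A_\beta$) inequality (\ref{decompro-inequality-1}) becomes
$$\alpha\big(\nu(A_\alpha)-\nu(A_\beta)\big)\le\mu(A_\alpha)-\mu(A_\beta).$$
Rearranging, and dropping the nonnegative term $\mu(A_\beta)$ on the right, this yields
$$\alpha\nu(A_\alpha)\le\mu(A_\alpha)+\alpha\nu(A_\beta).$$

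The crucial step is then to hold $\alpha$ fixed and let $\beta\to\infty$ through $\mathbb{Q}^{+}$. Since $\nu(A_\beta)\le\mu(A_\beta)\vee\nu(A_\beta)\to 0$ and $\alpha$ is now a constant, the term $\alpha\nu(A_\beta)$ tends to $0$, while the left-hand side does not depend on $\beta$; hence $\alpha\nu(A_\alpha)\le\mu(A_\alpha)$ for every sufficiently large $\alpha\in\mathbb{Q}^{+}$. Letting $\alpha\to\infty$ and invoking monotonicity of $\nu$ along the decreasing family $\{A_\alpha\}$ together with the hypothesis, we get
$$0\le\alpha\nu(A_\alpha)\le\mu(A_\alpha)\le\mu(A_\alpha)\vee\nu(A_\alpha)\longrightarrow 0,$$
so $\lim_{\alpha\to\infty}\alpha\nu(A_\alpha)=0$ by the squeeze theorem.

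There is no real obstacle in this argument; the one thing that requires a moment's thought is recognizing the asymmetric roles of the two indices in (\ref{decompro-inequality-1}), which is exactly what lets freezing $\alpha$ and sending $\beta\to\infty$ kill the spurious term $\alpha\nu(A_\beta)$ without touching $\alpha\nu(A_\alpha)$. The only other care needed is the routine bookkeeping to make sure the decomposition inequality is applied to a set of finite $\mu$- and $\nu$-measure. Note that the upper estimate (\ref{decompro-inequality-2}) is not used at all. A heavier alternative would be to telescope (\ref{decompro-inequality-1}) along a geometric sequence $\alpha_n=2^{n}\alpha_0$ and apply Abel summation, but the direct two-parameter limit above makes that unnecessary.
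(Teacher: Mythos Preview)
Your argument is correct and essentially the same as the paper's: apply inequality~(\ref{decompro-inequality-1}) with $A$ chosen so that $A\cap A_\alpha=A_\alpha$, let $\beta\to\infty$ to obtain $\alpha\nu(A_\alpha)\le\mu(A_\alpha)$, then let $\alpha\to\infty$. You are slightly more explicit than the paper about guaranteeing finiteness of the measures before invoking (\ref{decompro-inequality-1}), but the core idea is identical.
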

\begin{proof}
We can assume that $\mu(A_\alpha)\vee\nu(A_\alpha)<\infty$.
From the first inequality in Definition~\ref{Dfnt-decompro}, we get
\[
\alpha\Big(\nu(A_\alpha)-\nu(A_\beta)\Big)\leq \mu(A_\alpha)-\mu(A_\beta)
\]
holds for any $\alpha<\beta$, and hence we have $\alpha\nu(A_\alpha)\leq\mu(A_\alpha)$ by letting $\beta\to\infty$
as $\lim\limits_{\beta\to\infty}\mu(A_\beta)\vee\nu(A_\beta)=0$. Thus we also have $\alpha\nu(A_\alpha)$ tends to $0$ whenever $\alpha$ tends to $\infty$.
\end{proof}


\bigskip
Now we show our main result --- a version of Radon-Nikodym theorem for finite monotone measures.

\begin{thrm}\label{R-N-thrm-finitecase}
Let $\mu, \nu$ be two finite monotone measures on $(U, {\mathcal U})$. Then the following two assertions are equivalent:

{\rm (i)} The ordered pair $(\mu, \nu)$ has \emph{Radon-Nikodym property}, i.e.,
 there is a nonnegative measurable function $f\colon U\to \overline{\mathbb{R}}_{+}$
 such that
\begin{equation}\label{Eq_R-N}
\mu(A)=\int_A fd\nu, \ \ \forall\, A\in{\mathcal U}.
\end{equation}

{\rm (ii)} The ordered pair $(\mu, \nu)$ has decomposition property w.r.t. a sets system
$\{A_\alpha\}_{\alpha\in \mathbb{Q}^{+}}$ and
$\lim\limits_{\alpha\to\infty}\mu(A_\alpha)\vee \nu(A_\alpha)=0$.
\end{thrm}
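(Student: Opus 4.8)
### Proof proposal

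\textbf{Overall strategy.} The plan is to prove the two implications separately, with the substantive work concentrated in (ii)$\Rightarrow$(i). For (i)$\Rightarrow$(ii), given $f$ with $\mu(A)=\int_A fd\nu$, I would set $A_\alpha=\{f\geq\alpha\}$ for $\alpha\in\mathbb{Q}^+$ (with $A_0=U$ since $f\geq 0$). This is clearly a decreasing family. To verify the decomposition inequalities \eqref{decompro-inequality-1}--\eqref{decompro-inequality-2}, fix $\alpha<\beta$ and a set $A$ with finite $\mu$- and $\nu$-measure, and write $\mu(A\cap A_\alpha)-\mu(A\cap A_\beta)=\int_{A\cap A_\alpha}fd\nu-\int_{A\cap A_\beta}fd\nu$. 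On $A_\alpha\setminus A_\beta$ the function $f$ takes values in $[\alpha,\beta)$, so using Proposition~\ref{Prot-BasicPropofChoInt}(v) together with comonotonicity (the functions $f\chi_{A\cap A_\beta}$ and $f\chi_{A\cap(A_\alpha\setminus A_\beta)}$ are comonotone, or more simply truncation arguments via $(f\wedge\beta-\alpha)\vee 0$ restricted appropriately), one sandwiches the difference between $\alpha(\nu(A\cap A_\alpha)-\nu(A\cap A_\beta))$ and $\beta(\nu(A\cap A_\alpha)-\nu(A\cap A_\beta))$. Finally $\mu(A_\alpha)\vee\nu(A_\alpha)\to 0$ as $\alpha\to\infty$ follows from $\nu(A_\alpha)\to 0$ (since $\nu$ is finite and $A_\alpha\searrow\{f=\infty\}$, a $\nu$-null-ish set — here I must be careful, this needs $\nu(\{f=\infty\})=0$, which follows because $\int fd\nu=\mu(U)<\infty$) and $\mu(A_\alpha)=\int_{A_\alpha}fd\nu\leq\int(f\chi_{A_\alpha})d\nu\to 0$ by the integrability tail estimate.

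\textbf{The hard direction (ii)$\Rightarrow$(i).} This is where I expect the main effort. Given the family $\{A_\alpha\}_{\alpha\in\mathbb{Q}^+}$, I would define
\[
f(t)=\sup\{\alpha\in\mathbb{Q}^+ : t\in A_\alpha\},
\]
so that $\{f\geq\alpha\}$ is sandwiched between $\bigcap_{\gamma<\alpha}A_\gamma$ and $A_\alpha$ for rational $\alpha$, and $f$ is measurable since $\{f>\alpha\}=\bigcup_{\gamma>\alpha,\gamma\in\mathbb{Q}^+}A_\gamma\in\mathcal U$. The goal is to show $\mu(A)=\int_A fd\nu$ for every $A\in\mathcal U$; since $\mu,\nu$ are finite, every $A$ has finite measures, so the decomposition inequalities apply to all of them. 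The core computation is to telescope: for a partition $0=\alpha_0<\alpha_1<\cdots<\alpha_n$ of rationals, sum \eqref{decompro-inequality-1}--\eqref{decompro-inequality-2} applied to consecutive pairs $(\alpha_{i},\alpha_{i+1})$ with the set $A$, obtaining
\[
\sum_{i}\alpha_i\big(\nu(A\cap A_{\alpha_i})-\nu(A\cap A_{\alpha_{i+1}})\big)\ \leq\ \mu(A\cap A_{\alpha_0})-\mu(A\cap A_{\alpha_n})\ \leq\ \sum_{i}\alpha_{i+1}\big(\nu(A\cap A_{\alpha_i})-\nu(A\cap A_{\alpha_{i+1}})\big).
\]
The two bounding sums are lower and upper Riemann–Stieltjes-type sums for $\int_0^{\alpha_n}\nu(A\cap A_\gamma)\,d\gamma$ — more precisely, I would relate them to $\int_0^{\alpha_n}\nu(A\cap\{f\geq\gamma\})\,d\gamma$ by controlling the (at most countably many) $\gamma$ where $A_\gamma$ and $\{f\geq\gamma\}$ differ. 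Refining the partition so the mesh $\to 0$ and sending $\alpha_n\to\infty$, the left side $A\cap A_{\alpha_0}=A$ gives $\mu(A)$, the term $\mu(A\cap A_{\alpha_n})\to 0$ by hypothesis, and both Riemann sums converge to $\int_0^\infty\nu(A\cap\{f\geq\gamma\})\,d\gamma=\int_A fd\nu$. Squeezing yields \eqref{Eq_R-N}.

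\textbf{Anticipated obstacles.} The main technical obstacle is the passage from the finite telescoped inequalities to the integral in the limit: monotone measures are not additive, so I cannot manipulate $\nu(A\cap A_{\alpha_i})-\nu(A\cap A_{\alpha_{i+1}})$ as a measure of a difference set, and the sums above are genuine Riemann sums of the monotone (hence Riemann-integrable on finite intervals) function $\gamma\mapsto\nu(A\cap A_\gamma)$ rather than exact quantities. I need the improper Riemann integral defining the Choquet integral to exist, and I need uniform control as $\alpha_n\to\infty$ — this is exactly what $\lim_{\alpha\to\infty}\alpha\nu(A_\alpha)=0$ (Lemma~\ref{Lemm-lemma4-2}) delivers, since the gap between the upper and lower sums on $[0,\alpha_n]$ is bounded by $(\text{mesh})\cdot\nu(A\cap U)$ plus a tail term comparable to $\alpha_n\nu(A_{\alpha_n})$. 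A second, more minor subtlety is the discrepancy between $A_\gamma$ and $\{f\geq\gamma\}$ at non-rational $\gamma$ and at the countably many rationals where the family jumps; since a countable set has Lebesgue measure zero and the integrand is bounded by $\nu(U)<\infty$, this does not affect the value of the improper Riemann integral, but it must be addressed explicitly rather than waved away.
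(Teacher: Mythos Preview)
Your plan for (ii)$\Rightarrow$(i) is essentially the paper's argument: define $f(t)=\sup\{\alpha:t\in A_\alpha\}$, telescope the decomposition inequalities over a fine rational partition of $[0,n]$, identify the flanking sums as Riemann sums for $\int_0^n\nu(A\cap A_\gamma)\,d\gamma$, and use Lemma~\ref{Lemm-lemma4-2} to kill the tail as $n\to\infty$. The paper simply fixes the dyadic partition $\{k/2^n\}$ and works with the explicit simple function $f_n=2^{-n}\sum_{k=1}^{n2^n}\chi_{A_{k/2^n}}$, but the mechanism is the same.

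In (i)$\Rightarrow$(ii), however, there are two genuine gaps. First, your argument for $\lim_{\alpha\to\infty}\nu(A_\alpha)=0$ is wrong as stated: you write that $A_\alpha\searrow\{f=\infty\}$ and $\nu(\{f=\infty\})=0$, but monotone measures are \emph{not} assumed continuous from above, so a decreasing limit of sets need not have the limiting $\nu$-measure. The correct (and shorter) argument is the Chebyshev-type bound $\alpha\,\nu(\{f\geq\alpha\})\leq\int_{\{f\geq\alpha\}}f\,d\nu=\mu(A_\alpha)\leq\mu(U)<\infty$, which gives $\nu(A_\alpha)\leq\mu(U)/\alpha\to 0$ directly. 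Second, your proposed verification of the decomposition inequalities via comonotonicity of $f\chi_{A\cap A_\beta}$ and $f\chi_{A\cap(A_\alpha\setminus A_\beta)}$ fails: take $t_1\in A\cap A_\beta$ and $t_2\in A\cap(A_\alpha\setminus A_\beta)$; then the first function goes from $f(t_1)\geq\beta$ down to $0$ while the second goes from $0$ up to $f(t_2)\geq\alpha$, so the product of differences is negative. The paper bypasses comonotonicity here entirely and just expands the definition,
\[
\mu(A\cap A_\alpha)-\mu(A\cap A_\beta)=\int_0^\infty\bigl(\nu(A\cap A_\alpha\cap A_t)-\nu(A\cap A_\beta\cap A_t)\bigr)\,dt,
\]
observing that for $t\leq\alpha$ the integrand equals $\nu(A\cap A_\alpha)-\nu(A\cap A_\beta)$, for $t\geq\beta$ it vanishes, and in between it lies in $[0,\nu(A\cap A_\alpha)-\nu(A\cap A_\beta)]$; the two inequalities drop out immediately. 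Finally, your ``integrability tail estimate'' for $\mu(A_\alpha)\to 0$ is the right idea but needs the comonotonic split $f=(f-n)^+ +(f\wedge n)$ (these \emph{are} comonotone) together with $\int(f-n)^+\,d\nu\to 0$, combined with $\mu(A_n)\leq\int(f-n)^+\,d\nu+n\,\nu(A_n)$ and the already-established $n\,\nu(A_n)\to 0$.
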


\begin{proof}
(i)$\Longrightarrow$ (ii).
%
%
Suppose there is a nonnegative measurable function $f$
such that Eq. (\ref{Eq_R-N}) holds, then $f$ is Choquet integrable w.r.t. $\nu$ on $U$,
i.e., $\int fd\nu=\mu(U)<\infty$.

First we show that $(\mu, \nu)$ has decomposition property. Put $A_\alpha=\{f\geq\alpha\}$, then $\{A_\alpha\}$ is decreasing and $A_0=U$.
For any $A\in{\mathcal U}$ with finite measure
(i.e., $\mu(A)\vee\nu(A)<\infty$), and any $\alpha<\beta$ it holds
\begin{eqnarray*}
\mu(A\cap A_\alpha)-\mu(A\cap A_\beta)&=&\int_{A\cap A_\alpha}fd\nu-\int_{A\cap A_\beta}fd\nu\\
&=&\int_0^\infty \Big(\nu(A\cap A_\alpha\cap A_t)-\nu(A\cap A_\beta\cap A_t)\Big)dt\\
&\geq&\int_0^\alpha \Big(\nu(A\cap A_\alpha)-\nu(A\cap A_\beta)\Big)dt\\
&=&\alpha\Big(\nu(A\cap A_\alpha)-\nu(A\cap A_\beta)\Big).
\end{eqnarray*}

On the other hand, we have
\begin{eqnarray*}
\mu(A\cap A_\alpha)-\mu(A\cap A_\beta)&=&\int_0^\beta \Big(\nu(A\cap A_\alpha\cap A_t)-\nu(A\cap A_\beta)\Big)dt\\
&&+ \int_\beta^\infty \Big(\nu(A\cap A_t)-\nu(A\cap A_t)\Big)dt\\
&\leq&\int_0^\beta \Big(\nu(A\cap A_\alpha)-\nu(A\cap A_\beta)\Big)dt\\
&=&\beta\Big(\nu(A\cap A_\alpha)-\nu(A\cap A_\beta)\Big).
\end{eqnarray*}

The assertion $\lim\limits_{\alpha\to\infty}\nu(A_\alpha)=0$ follows from $\alpha\nu(A_\alpha)\leq\int_{A_\alpha}f d\nu=\mu(A_\alpha)<\infty$.

Since $f=((f-n)\vee 0)+ (f\wedge n)$ and $f=((f-n)\vee 0)$ and $(f\wedge n)$
are comonotone, then
\[
\int fd\nu=\int ((f-n)\vee 0)d\nu+\int (f\wedge n)d\nu
\]
holds for each $n$.
Therefore, from $\int fd\nu=\lim\limits_{n\to\infty}\int (f\wedge n)d\nu$
and noting that $\int fd\nu<\infty$,
we conclude that
$$
\lim\limits_{n\to\infty}\int ((f-n)\vee 0)d\nu=0.
$$
 Also,
\begin{eqnarray*}
\mu(A)=\int_A fd\nu   &=& \int_A ((f-n)\vee 0)d\nu+\int_A (f\wedge n)d\nu \\
                    &\leq& \int ((f-n)\vee 0)d\nu+ n\nu(A)
\end{eqnarray*}
for each $A\in\mathcal{U}$. Specifically,
$$
\mu(A_n)\leq \int ((f-n)\vee 0)d\nu+ n\nu(A_n)
$$
for each $n$, it follows that $\lim\limits_{n\to\infty}\mu(A_n)=0$.
Therefore, $\lim\limits_{\alpha\to\infty}\mu(A_\alpha)=0$ as
$\{A_\alpha\}_{\alpha\geq 0}$ is a decreasing family of measurable sets.

 (ii)$\Longrightarrow$(i). Suppose that $(\mu, \nu)$ has decomposition property and $\{A_\alpha\}_{\alpha\in \mathbb{Q}^{+}}$ is the corresponding sets system satisfying $\lim\limits_{\alpha\to\infty}\mu(A_\alpha)\vee\nu(A_\alpha)=0$.
We show that there is a nonnegative measurable function $f\colon U\to \overline{\mathbb{R}}_{+}$ such that Eq. (\ref{Eq_R-N}) holds.

Define $f\colon U\to [0, \infty]$ as
\begin{equation}\label{Eq-DecomFun}
f(x)=\sup\{\alpha\,|\,x\in A_\alpha\}.
\end{equation}
Then $f$ is measurable and $\mu(\{f=\infty\})=0$ as $\{f=\infty\}=\bigcap\limits_{\alpha\in\mathbb{N}}A_\alpha$.

For each positive integer $n$, define
\[
f_n(x)=\left\{
\begin{array}{ll}
    \frac{k-1}{2^n},             &\mbox{if\,\,} x\in A_{\frac{k-1}{2^n}}\setminus A_{\frac{k}{2^n}}, k=1, 2, \cdots, n2^n, \\
    n,   &\mbox{if\,\,} x\in A_n.
\end{array} \right.
\]
Then $\{f_n\}_{n\in\mathbb{N}}$ is an increasing sequence and $f\wedge n-\frac{1}{2^n}\leq f_n\leq f$.
Since $f_n$ can be rewritten as
\[
f_n=\frac{1}{2^n}\sum_{k=1}^{n\cdot 2^n}\chi_{A_\frac{k}{2^n}},
\]
for any given $A\in{\mathcal U}$,  we have
\begin{eqnarray*}
& & \int_A f_nd\nu =
\frac{1}{2^n}\sum_{k=1}^{n\cdot 2^n}\nu(A\cap A_\frac{k}{2^n})\\
&=&
\sum_{k=1}^{n\cdot 2^n-1}\frac{k}{2^n}\Big(\nu(A\cap A_\frac{k}{2^n})-\nu(A\cap A_\frac{k+1}{2^n})\Big)+n\nu(A\cap A_n)\\
&\leq& \sum_{k=1}^{n\cdot 2^n-1}\Big(\mu(A\cap A_\frac{k}{2^n})-\mu(A\cap A_\frac{k+1}{2^n})\Big)+n\nu(A\cap A_n)\\
&=& \mu(A\cap A_\frac{1}{2^n})-\mu(A\cap A_n)+ n\nu(A\cap A_n).
\end{eqnarray*}
It follows from the assumption and
Lemma \ref{Lemm-lemma4-2} that
\[
\lim\limits_{n\to\infty}\int_A f_nd\nu\leq \lim\limits_{n\to\infty}\mu(A\cap A_\frac{1}{2^n})\leq\mu(A).
\]
The inequality $f\wedge n\leq f_n+ \frac{1}{2^n}$ implies that
\[
\int_A (f\wedge n)d\nu\leq \int_A f_nd\nu+ \int_A \frac{1}{2^n}d\nu= \int_A f_nd\nu+ \frac{1}{2^n}\nu(A).
\]
By virtue of Proposition \ref{Prot-BasicPropofChoInt}(vi) we get
\[
\int_A fd\nu=\lim\limits_{n\to\infty}\int_A (f\wedge n)d\nu\leq\lim\limits_{n\to\infty}\int_A f_nd\nu\leq\mu(A).
\]

On the other hand,
\begin{eqnarray*}
& & \int_A f_nd\nu =
\frac{1}{2^n}\sum_{k=1}^{n\cdot 2^n}\nu(A\cap A_\frac{k}{2^n})\\
&=&
\sum_{k=1}^{n\cdot 2^n-1}\frac{k+1}{2^n}\Big(\nu(A\cap A_\frac{k}{2^n})-\nu(A\cap A_\frac{k+1}{2^n})\Big) \\
& & \ \ \ \ \ \ \ \ \
+ \ (n+\frac{1}{2^n})\nu(A\cap A_n)-\frac{1}{2^n}\nu(A\cap A_{\frac{1}{2^n}})\\
&\geq&
\sum_{k=1}^{n\cdot 2^n-1}\Big(\mu(A\cap A_\frac{k}{2^n})-\mu(A\cap A_\frac{k+1}{2^n})\Big)\\
& & \ \ \ \ \ \ \ \ \
+ \ (n+\frac{1}{2^n})\nu(A\cap A_n)-\frac{1}{2^n}\nu(A\cap A_{\frac{1}{2^n}})\\
&\geq& \mu(A\cap A_\frac{1}{2^n})-\mu(A\cap A_n)+(n+\frac{1}{2^n})\nu(A\cap A_n)-\frac{1}{2^n}\nu(A\cap A_{\frac{1}{2^n}}).
\end{eqnarray*}
By the decomposition property we have
\begin{eqnarray*}
\mu(A)-\mu(A\cap A_\frac{1}{2^n})
&=&\mu(A\cap A_0)-\mu(A\cap A_\frac{1}{2^n})\\
&\leq& \frac{1}{2^n}(\nu(A\cap A_0)-\nu(A\cap A_\frac{1}{2^n}))\to 0\, (n\to\infty),
\end{eqnarray*}
\emph{i.e.}, $\mu(A\cap A_\frac{1}{2^n})\to\mu(A)\, (n\to\infty)$. Since both $\mu(A_n)$ and $n\nu(A_n)$ tend to $0$ when $n\to\infty$,
it then holds that
\[
\int_A fd\nu\geq\lim\limits_{n\to\infty}\int_A f_nd\nu\geq\mu(A)
\]
as $f\geq f_n$ for each $n$. Thus we reach Eq. (\ref{Eq_R-N}),
\begin{equation*}
\mu(A)=\int_A fd\nu, \ \ \forall\, A\in{\mathcal U}.
\end{equation*}

The proof is complete.
\end{proof}

The Radon-Nikodym theorem for classical measures concerns the absolute continuity of measures \cite{Hal68}.
For monotone measures, there are various types of absolute continuity
(see \cite{LMZ2010,OuyLi04,Wan96,Wang1996a}). Let $\mu, \nu$ be two monotone measures on $(U, {\mathcal U})$.
(1) If for any $A\in{\mathcal U}$, $\nu(A)=0$ implies $\mu(A)=0$,
then we say that $\mu$ is absolutely continuous w.r.t. $\nu$
and denoted by $\mu\ll\nu$.
(2) If for each $\epsilon>0$ there is a $\delta>0$ such that $\mu(A)<\epsilon$
for all sets $A\in{\mathcal U}$ satisfying $\nu<\delta$, then we say that $\mu$ is strongly absolutely continuous w.r.t. $\nu$ and denoted by $\mu\ll^{s}\nu$ (\cite{LMZ2010}).

Obviously,  $\mu\ll^{s}\nu$ implies $\mu\ll\nu$, but the converse is not true.

Observe that Theorem~\ref{R-N-thrm-finitecase}(ii) implies $\mu\ll\nu$
and $\mu\ll^{s}\nu$.
In fact, assume $\nu(A)=0$. From the second inequality in Definition~\ref{Dfnt-decompro}, we take $\alpha=0, \beta>0$, then
$$
\mu(A)- \mu(A\cap A_{\beta})\leq \beta\Big(\nu(A)- \nu(A\cap A_{\beta})\Big),
$$
which implies $\mu(A)- \mu(A\cap A_{\beta})=0$ for any $\beta>0$. Therefore,
$\mu(A)=0$ as $\lim_{\beta\to\infty}\mu(A_\beta)=0$.
Similarly, $\mu\ll^{s}\nu$ is also true.

Thus, we obtain necessary conditions that the Radon-Nikodym theorem in classical measure theory remains valid for the Choquet integral w.r.t. monotone measures (see also \cite{Wang1996a}).
\begin{corl}\label{R-N-AC}
Let $\mu, \nu$ be two finite monotone measures on $(U, {\mathcal U})$.
If there is a nonnegative measurable function $f\colon U\to \overline{\mathbb{R}}_{+}$
 such that Eq. (\ref{Eq_R-N}) holds, i.e.,
\begin{equation*}
\mu(A)=\int_A fd\nu, \ \ \forall\, A\in{\mathcal U},
\end{equation*}
then $\mu\ll\nu$ and $\mu\ll^{s}\nu$.
\end{corl}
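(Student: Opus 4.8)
The plan is to derive this corollary directly from the discussion preceding it, which already establishes that the Radon-Nikodym property (Theorem~\ref{R-N-thrm-finitecase}(i)) forces both absolute continuity notions via the decomposition property. Concretely, the hypothesis of the corollary is exactly assertion (i) of Theorem~\ref{R-N-thrm-finitecase}, so by the equivalence (i)$\Longleftrightarrow$(ii) we obtain a sets system $\{A_\alpha\}_{\alpha\in\mathbb{Q}^{+}}$ witnessing the decomposition property, with $\lim_{\alpha\to\infty}\mu(A_\alpha)\vee\nu(A_\alpha)=0$. The remaining work is then purely the elementary argument already sketched in the text between Corollary~\ref{R-N-AC} and its antecedent, which I would write out carefully.

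For $\mu\ll\nu$: fix $A\in{\mathcal U}$ with $\nu(A)=0$. Since $\mu,\nu$ are finite, $A$ has finite measures for both, so inequalities (\ref{decompro-inequality-1})--(\ref{decompro-inequality-2}) apply. Taking $\alpha=0$ and any rational $\beta>0$ in the second inequality (\ref{decompro-inequality-2}) gives
\[
\mu(A)-\mu(A\cap A_\beta)\leq \beta\bigl(\nu(A)-\nu(A\cap A_\beta)\bigr)\leq \beta\,\nu(A)=0,
\]
using monotonicity of $\nu$ and $\nu(A)=0$ for the last inequality. Hence $\mu(A)\leq\mu(A\cap A_\beta)\leq\mu(A_\beta)$ for every rational $\beta>0$, and letting $\beta\to\infty$ along $\mathbb{Q}^{+}$ yields $\mu(A)\leq\lim_{\beta\to\infty}\mu(A_\beta)=0$, i.e. $\mu(A)=0$.

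For $\mu\ll^{s}\nu$: fix $\epsilon>0$. By $\lim_{\alpha\to\infty}\mu(A_\alpha)=0$ choose a rational $\beta$ large enough that $\mu(A_\beta)<\epsilon/2$, and set $\delta=\epsilon/(2\beta)$. For any $A\in{\mathcal U}$ with $\nu(A)<\delta$, the same instance of (\ref{decompro-inequality-2}) with $\alpha=0$ gives
\[
\mu(A)\leq \mu(A\cap A_\beta)+\beta\bigl(\nu(A)-\nu(A\cap A_\beta)\bigr)\leq \mu(A_\beta)+\beta\,\nu(A)<\frac{\epsilon}{2}+\beta\cdot\frac{\epsilon}{2\beta}=\epsilon,
\]
which is precisely the definition of $\mu\ll^{s}\nu$. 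Then $\mu\ll^{s}\nu\Rightarrow\mu\ll\nu$ gives the first assertion again, but I would keep the direct argument for $\mu\ll\nu$ as well since it is equally short. There is no real obstacle here: the only subtlety worth flagging is that the decomposition inequalities are only asserted for sets of finite measure, which is automatic under the finiteness hypothesis on $\mu$ and $\nu$, and that the limits over $\beta$ are taken through rationals, which suffices because $\{A_\alpha\}$ is decreasing so $\mu(A_\beta)$ is monotone in $\beta$.
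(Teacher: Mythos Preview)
Your proposal is correct and follows essentially the same route as the paper: pass from (i) to (ii) via Theorem~\ref{R-N-thrm-finitecase}, then use inequality~(\ref{decompro-inequality-2}) with $\alpha=0$ together with $\lim_{\beta\to\infty}\mu(A_\beta)=0$ to extract both absolute continuity statements. In fact you spell out the $\mu\ll^{s}\nu$ step in more detail than the paper, which merely says ``Similarly''; your choice of $\beta$ with $\mu(A_\beta)<\epsilon/2$ and $\delta=\epsilon/(2\beta)$ is exactly the intended argument.
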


Note that the measurable function $f$ in Theorem \ref{R-N-thrm-finitecase} is not unique in general.

\begin{exa}\label{Exa-withoutWeaklyNull-add.}
Let $U$ be the set of all positive integers, $\mathcal U$ the power set of $U$ and
\[
A_\alpha=\left \{
        \begin {array}{ll}
         U,
                  &\quad \text{if}\ \alpha\in [0, 1]\cap\mathbb{Q},  \\[1mm]
         \{2, 4, 6, \cdots\},
                  &\quad \text{if}\ \alpha\in (1, 2]\cap\mathbb{Q}, \\[1mm]
         \emptyset,
                  &\quad \text{if}\ \alpha\in (2, \infty)\cap\mathbb{Q}.
        \end {array}
       \right.
\]
Define
\[
\mu(A)=\nu(A)=\left \{
        \begin {array}{ll}
         1,
                  &\quad \text{if\,} A=U,  \\[1mm]
         0,
                  &\quad \text{otherwise.}
        \end {array}
       \right.
\]
It is routine to verify that $(\mu, \nu)$ has decomposition property w.r.t. $\{A_\alpha\}_{\alpha\in \mathbb{Q}^{+}}$. The condition $\lim\limits_{\alpha\to\infty}\mu(A_\alpha)\vee\nu(A_\alpha)=0$ is obviously satisfied.
By Theorem \ref{R-N-thrm-finitecase} there exists a nonnegative function $f$ such that $\mu(A)=\int_A fd\nu$ for each $A$. In fact
\[
f_1(x)=\sup\{\alpha \ | \ x\in A_\alpha\}=\left \{
        \begin {array}{ll}
         1,
                  &\quad \text{if\, $x$ is odd} \\[1mm]
         2,
                  &\quad \text{if\, $x$ is even}
        \end {array}
       \right.
\]
is such a function. Note that $f_2$ defined by
\[
f_2(x)=\left \{
        \begin {array}{ll}
         1,
                  &\quad \text{if\, $x$ is even} \\[1mm]
         2,
                  &\quad \text{if\, $x$ is odd}
        \end {array}
       \right.
\]
also satisfies $\mu(A)=\int_A f_2d\nu$ (there are in fact infinitely many such functions).
Interestingly, $f_1$ and $f_2$ are different at every point and thus $\nu(\{f_1\neq f_2\})=\nu(U)=1$.
\end{exa}

To ensure the uniqueness of $f$, we have to impose some additional conditions. Recall that a monotone measure $\mu$ is said to be
(i) \ {\it weakly null-additive} \cite{WanKli09}, if $\mu(A_{1}\cup A_{2})=0$ for any $A_{1}, A_{2}\in{\mathcal U}$ with $\mu(A_{1})=\mu(A_{2})=0$; (ii) \ {\it null-continuous} \cite{AsaMuro2006},
 if  $\mu(\bigcup_{n=1}^{\infty}A_n) = 0$
for every increasing sequence $\{A_n\}_{n\in N} \subset
{\mathcal{A}}$ such that $\mu(A_n) = 0, n=1,2,\cdots.$
The monotone measure $\mu$ is both weakly null-additive and null-continuous if and only if
$\mu(\bigcup_{n=1}^{\infty}A_n) = 0$ whenever $\{A_n\}_{n\in \mathbb{N}} \subset {\mathcal{A}}$ and $\mu(A_n) = 0, n=1,2,\cdots$, see \cite{Li2000}.
Such a monotone measure $\mu$ is called to have {\it property} ($\sigma$), i.e.,
the set of all $\mu$-null sets is a $\sigma$-ideal, see \cite{CanVol02}.

\begin{prot}\label{Prot-uniqueness}
Let $\mu, \nu$ be two monotone measures and $\nu$ is weakly null-additive and null-continuous.
If measurable functions $f, g\colon U\to [0, \infty]$ satisfy Eq. (\ref{Eq_R-N}), i.e.,
\[
\mu(A)=\int_A fd\nu=\int_A gd\nu, \ \ \forall\,A\in{\mathcal U},
\]
then $f=g \ a.e.[\nu]$ (i.e., $\nu(\{f\neq g\})=0$).

\end{prot}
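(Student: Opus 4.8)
The plan is to show that if $f$ and $g$ both satisfy the Radon-Nikodym equation, then the set $\{f \neq g\}$ decomposes into countably many $\nu$-null pieces, and then invoke property ($\sigma$) (equivalently, weak null-additivity together with null-continuity of $\nu$) to conclude $\nu(\{f \neq g\}) = 0$. Without loss of generality it suffices to prove that $\nu(\{f < g\}) = 0$, since by symmetry the same argument yields $\nu(\{f > g\}) = 0$, and then weak null-additivity gives $\nu(\{f \neq g\}) = \nu(\{f<g\}\cup\{f>g\}) = 0$.

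First I would write $\{f < g\}$ as a countable union over rationals: $\{f < g\} = \bigcup_{r \in \mathbb{Q}^+} \big(\{f < r\} \cap \{g > r\}\big)$. By null-continuity and weak null-additivity it is enough to show that each set $E_r := \{f < r\} \cap \{g \geq r\}$ (or the analogous set with strict/nonstrict inequalities arranged conveniently) is $\nu$-null. Fix such an $r$ and set $E = E_r$. The key step is to compare $\int_E f\, d\nu$ and $\int_E g\, d\nu$ on this set where $f$ is bounded above by $r$ and $g$ is bounded below by $r$. Using the layer-cake formula, $\int_E g\, d\nu = \int_0^\infty \nu(E \cap \{g \geq \alpha\})\, d\alpha \geq \int_0^r \nu(E)\, d\alpha = r\,\nu(E)$, since $E \subset \{g \geq r\} \subset \{g \geq \alpha\}$ for $\alpha \le r$. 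On the other hand, because $f < r$ on $E$, we have $\int_E f\, d\nu = \int_0^r \nu(E \cap \{f \geq \alpha\})\, d\alpha$, and the point is to extract from this the strict deficit relative to $r\nu(E)$.

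The main obstacle is precisely this last point: Choquet integrals are not additive, so one cannot simply subtract and must argue more carefully that $\int_E f\,d\nu < r\,\nu(E)$ unless $\nu(E)=0$. I would handle this by a further countable subdivision — replace $E_r$ by $E_{r,s} := \{f \le s\} \cap \{g \ge r\}$ with $s, r \in \mathbb{Q}^+$, $s < r$ — so that on $E_{r,s}$ one has the clean bound $\int_{E_{r,s}} f\, d\nu = \int_0^s \nu(E_{r,s} \cap \{f \ge \alpha\})\, d\alpha \le s\,\nu(E_{r,s})$, while $\int_{E_{r,s}} g\, d\nu \ge r\,\nu(E_{r,s})$ as above. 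Since $\int_{E_{r,s}} f\, d\nu = \mu(E_{r,s}) = \int_{E_{r,s}} g\, d\nu$, we get $r\,\nu(E_{r,s}) \le s\,\nu(E_{r,s})$ with $s < r$, forcing $\nu(E_{r,s}) = 0$ (the value is finite since one may also intersect with sets of finite $\nu$-measure, or argue directly that it cannot be positive and finite, and the infinite case is excluded similarly by the $\mu$-integrability forcing $\nu(E_{r,s})<\infty$ whenever $r>0$). Finally $\{f < g\} = \bigcup_{s < r,\ s,r \in \mathbb{Q}^+} E_{r,s}$, a countable union of $\nu$-null sets, so property ($\sigma$) gives $\nu(\{f<g\}) = 0$; combining with the symmetric statement and weak null-additivity completes the proof.
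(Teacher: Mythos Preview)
Your argument is correct and follows the same overall skeleton as the paper: show $\nu(\{f<g\})=0$ and $\nu(\{g<f\})=0$ separately, then combine via weak null-additivity. The difference lies in how each half is handled. The paper uses the single-parameter increasing family $A_n=\{f>g+\tfrac1n\}$; if $\nu(A)>0$ then null-continuity gives some $\nu(A_n)>0$, and comonotonic additivity of the Choquet integral with a constant (Proposition~\ref{comonotone-ChoInt}) yields
\[
\int_{A_n} f\,d\nu \;\ge\; \int_{A_n}\Big(g+\tfrac1n\Big)\,d\nu \;=\; \int_{A_n} g\,d\nu + \tfrac1n\,\nu(A_n) \;>\; \int_{A_n} g\,d\nu,
\]
an immediate contradiction. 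Your route instead uses the two-parameter family $E_{r,s}=\{f\le s\}\cap\{g\ge r\}$ and bare layer-cake bounds $\int_{E_{r,s}} f\,d\nu\le s\,\nu(E_{r,s})$, $\int_{E_{r,s}} g\,d\nu\ge r\,\nu(E_{r,s})$, avoiding comonotonic additivity altogether; the price is that the $E_{r,s}$ do not form an increasing chain, so you must invoke the full property~($\sigma$) on a doubly-indexed countable union rather than a single application of null-continuity. Both arguments share the same mild finiteness caveat (the strict inequality, respectively the cancellation $r\,\nu\le s\,\nu\Rightarrow\nu=0$, needs the relevant quantity to be finite), which you flag explicitly.
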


\begin{proof}
Let $A=\{f>g\}$ and $A_n=\{f>g+\frac{1}{n}\}$, then $A_n\nearrow A$. We conclude that $\nu(A)= 0$, otherwise there is some $n$ such that $\nu(A_n)>0$ as $\nu$ is null-continuous. Then
\begin{eqnarray*}
\int_{A_n}fd\nu\geq\int_{A_n}(g+\frac{1}{n})d\nu
&=& \int_{A_n}gd\nu+\int_{A_n}\frac{1}{n}d\nu\\
&=& \int_{A_n}gd\nu+\frac{1}{n}\nu(A_n)>\int_{A_n}gd\nu,
\end{eqnarray*}
a contradiction. It holds similarly that $\nu(B)=0$, where $B=\{g>f\}$. Since $\nu$ is weakly null-additive, we have $\nu(A\cup B)=0$. As a consequence
$\nu(\{f\neq g\}) = \nu(A\cup B)=0.$
\end{proof}

Note: We also obtain $f=g \ a.e.[\mu]$ (i.e., $\mu(\{f\neq g\})=0$).

Now by using Proposition~\ref{Prot-uniqueness} we can propose the concept
of Radon-Nikodym derivative for monotone measures.
In Theorem~\ref{R-N-thrm-finitecase}, we consider that $\nu$ is
weakly null-additive and null-continuous (i.e., $\nu$ has {\it property} ($\sigma$)),
then the measurable function $f$ on $U$ for which Eq. (\ref{Eq_R-N}) holds
is called a Radon-Nikodym derivative (or Radon-Nikodym density) of $\mu$ w.r.t. $\nu$, and denoted by $\frac{d\mu}{d\nu}$ and Eq. (\ref{Eq_R-N}) will be written as $f=\frac{d\mu}{d\nu}$ or $d\mu =fd\nu$.
Thus, the preceding Theorem~\ref{R-N-thrm-finitecase} asserts that if
the ordered pair $(\mu, \nu)$ satisfies the condition (ii) and $\nu$ has {\it property} ($\sigma$), then any two Radon-Nikodym derivatives of $\mu$ w.r.t. $\nu$ are equal $a.e. \ [\nu]$,
and so the notation $\frac{d\mu}{d\nu}$ is only ambiguous up to a $\nu$-null set.

We can discuss some properties of Radon-Nikodym derivative. For example,
suppose that $\mu, \lambda$ and $\nu$ are finite monotone measures on $(U, {\mathcal U})$
and $\nu$ has {\it property} ($\sigma$), and $(\mu,\nu)$ and $(\lambda,\nu)$
satisfy the condition (ii) in Theorem~\ref{R-N-thrm-finitecase}, respectively,
then $\frac{d\mu}{d\nu}$ and $\frac{d\lambda}{d\nu}$ exist.
We write $\frac{d\mu}{d\nu}=f$ and $\frac{d\lambda}{d\nu}=g$,
if $f$ and $g$ are comonotone, then we have
\[
\frac{d(\mu+\lambda)}{d\nu}= \frac{d\mu}{d\nu} + \frac{d\lambda}{d\nu} \ \  a.e.[\nu].
\]

\begin{remk}\label{Nte-sigmaaddmeasure}
Let $\mu, \nu$ be $\sigma$-additive finite measures. For each nonnegative rational number $\tau$ the signed measure $\mu-\tau\nu$ has a Hahn decomposition, \emph{i.e.},
there is a measurable set $A_\tau$ such that $A_\tau$ is a positive set of $\mu-\tau\nu$ and $A^{\rm c}_\tau$ is a negative set of $\mu-\tau\nu$.
Since $A_\tau$ is also a positive set of $\mu-\gamma\nu$ for any $\gamma\leq\tau$,
without loss of generality we can suppose that $\{A_\tau\}_{\tau\in \mathbb{Q}^{+}}$ is decreasing.
If $\tau=0$, then $U$ itself is a positive set of $\mu-\tau\nu=\mu$ and so $A_0=U$.
Let $\alpha<\beta$ be given. Since $A_\alpha$ is a positive set of $\mu-\alpha\nu$,
for any $A\in{\mathcal U}$, $(\mu-\alpha\nu)(A\cap (A_\alpha\setminus A_\beta))\geq 0$, \emph{i.e.},
\[
\alpha\Big(\nu(A\cap A_\alpha)-\nu(A\cap A_\beta)\Big)\leq \mu(A\cap A_\alpha)-\mu(A\cap A_\beta).
\]
Similarly, $A^{\rm c}_\beta$ is a negative set of $\mu-\beta\nu$ and hence is a positive set of $\beta\nu-\mu$.
For any $A\in{\mathcal U}$, $(\beta\nu-\mu)(A\cap (A_\alpha\setminus A_\beta))\geq 0$, \emph{i.e.},
\[
\mu(A\cap A_\alpha)-\mu(A\cap A_\beta)\leq\beta\Big(\nu(A\cap A_\alpha)-\nu(A\cap A_\beta)\Big).
\]
Thus $(\mu, \nu)$ has decomposition property and $(A_\alpha, A^{\rm c}_\alpha)$ is a Hahn decomposition of the signed measure $\mu-\alpha\nu$.

If $\mu, \nu$ further satisfy $\mu\ll\nu$, then we have $\lim\limits_{\alpha\to\infty}\nu(A_\alpha)=0$ and hence $\lim\limits_{\alpha\to\infty}\mu(A_\alpha)=0$.
In fact, if $\lim\limits_{\alpha\to\infty}\nu(A_\alpha)>0$, then $(\mu-\alpha\nu)(A_\alpha)\to-\infty$ when $\alpha\to\infty$,
contradicting with the fact that $A_\alpha$ is a positive set of $\mu-\alpha\nu$. On the other hand, if $\lim\limits_{\alpha\to\infty}\mu(A_\alpha)\vee\nu(A_\alpha)=0$,
then $\mu\ll\nu$. To see this, let $\nu(A)=0$ be given. For any $\beta>0$,
\[
\mu(A\cap A_0)-\mu(A\cap A_\beta)\leq\beta(\nu(A\cap A_0)-\nu(A\cap A_\beta))=0,
\]
which implies $\mu(A)=0$ as $A_0=U$ and $\mu(A_\beta)\to 0\, (\beta\to\infty)$.

In conclusion, for two $\sigma$-additive finite measures $\mu, \nu$, the pair $(\mu,\nu)$ has decomposition property, and $\mu\ll\nu$ if and only if
$\lim\limits_{\alpha\to\infty}\mu(A_\alpha)\vee\nu(A_\alpha)=0$.
\end{remk}

As a special case of Theorem~\ref{R-N-thrm-finitecase}, we thus obtain a classical Radon-Nikodym theorem.

\begin{corl}
Let $\mu, \nu$ be $\sigma$-additive finite measures on $(U, {\mathcal U})$. There exists a nonnegative measurable function $f\colon U\to\overline{\mathbb{R}}_{+}$ such that
\[
\mu(A)=\int_A fd\nu, \ \ \forall\, A\in{\mathcal U}
\]
if and only if $\mu\ll\nu$. 
\end{corl}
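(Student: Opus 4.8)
The plan is to read this off from Theorem~\ref{R-N-thrm-finitecase} together with Remark~\ref{Nte-sigmaaddmeasure}. Since $\nu$ is $\sigma$-additive, the Choquet integral $\int_A f\,d\nu$ coincides with the Lebesgue integral, so the displayed identity is exactly the conclusion of the classical Radon--Nikodym theorem, and it suffices to match the two hypotheses of the present framework with classical absolute continuity.

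For necessity, suppose $f\colon U\to\overline{\mathbb{R}}_{+}$ satisfies $\mu(A)=\int_A f\,d\nu$ for all $A\in{\mathcal U}$. Then $\mu\ll\nu$: one may invoke Corollary~\ref{R-N-AC} directly, or note that by Theorem~\ref{R-N-thrm-finitecase} the pair $(\mu,\nu)$ has the decomposition property w.r.t. some $\{A_\alpha\}_{\alpha\in\mathbb{Q}^{+}}$ with $\lim_{\alpha\to\infty}\mu(A_\alpha)\vee\nu(A_\alpha)=0$, and then take $\alpha=0$ and $\beta>0$ in the second inequality of Definition~\ref{Dfnt-decompro} and let $\beta\to\infty$ to obtain $\nu(A)=0\Rightarrow\mu(A)=0$.

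For sufficiency, assume $\mu\ll\nu$. I would produce the required sets system by the Hahn-decomposition construction of Remark~\ref{Nte-sigmaaddmeasure}: for each $\tau\in\mathbb{Q}^{+}$ choose a Hahn decomposition $(A_\tau,A_\tau^{\mathrm{c}})$ of the signed measure $\mu-\tau\nu$, arranged to be decreasing in $\tau$ with $A_0=U$. Remark~\ref{Nte-sigmaaddmeasure} shows that this family witnesses the decomposition property for $(\mu,\nu)$, and that $\mu\ll\nu$ forces $\lim_{\alpha\to\infty}\nu(A_\alpha)=0$ (otherwise $(\mu-\alpha\nu)(A_\alpha)\to-\infty$, contradicting that $A_\alpha$ is a positive set of $\mu-\alpha\nu$ while $\mu(U)<\infty$), whence also $\lim_{\alpha\to\infty}\mu(A_\alpha)=0$. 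Thus condition (ii) of Theorem~\ref{R-N-thrm-finitecase} is met, and that theorem delivers the desired $f$.

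The argument is entirely routine; the only step needing a little care is the passage from $\nu(A_\alpha)\to0$ to $\mu(A_\alpha)\to0$, which uses that for finite $\sigma$-additive measures $\mu\ll\nu$ is equivalent to the strong ($\epsilon$--$\delta$) absolute continuity $\mu\ll^{s}\nu$ --- a standard fact already built into Remark~\ref{Nte-sigmaaddmeasure}. I do not anticipate any genuine obstacle beyond this bookkeeping.
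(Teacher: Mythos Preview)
Your proposal is correct and follows precisely the route the paper intends: the corollary is stated as an immediate special case of Theorem~\ref{R-N-thrm-finitecase}, with Remark~\ref{Nte-sigmaaddmeasure} supplying the Hahn-decomposition sets system and the equivalence between $\mu\ll\nu$ and $\lim_{\alpha\to\infty}\mu(A_\alpha)\vee\nu(A_\alpha)=0$, and Corollary~\ref{R-N-AC} giving the necessity direction. There is no separate argument in the paper beyond this, so your write-up matches it essentially verbatim.
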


Let $\mu, \nu$ be bounded finitely additive measures such that for every $\epsilon>0$ there exists a finite decomposition of $U$,
$\{A_1,\cdots, A_n\}\subset{\mathcal U}$, satisfying $\mu(A_i)\vee\nu(A_i)<\epsilon$. For such measures,  Candeloro and Martellotti
proved in \cite{CanMar92} that if $\mu\ll^s\nu$ and the set $\{(\mu(A), \nu(A))| A\in{\mathcal U}\}$ is closed, then $(\mu, \nu)$ satisfies all requirements in Theorem \ref{R-N-thrm-finitecase}(ii) and thus $(\mu, \nu)$ has Radon-Nikodym property.


\begin{remk}
There are several papers dealing with Radon-Nikodym theorem for monotone measures (see,
for example, Graf \cite{Gra80}, Greco \cite{Gre81} and Nguyen et al. \cite{Ngu06,NguNguWan97}).
Graf obtained his result under the assumption that the monotone measures are subadditive and lower continuous, while Nguyen et al. demanded
the monotone measures being $\sigma$-subadditive. Greco \cite{Gre81} (see also Theorem 1.2 in Candeloro, Vol\v{c}i\v{c} \cite{CanVol02}) posed an additional requirement that
\[
(*) \ \ \ \ \  \mu(S)=\nu(S)=0\Rightarrow \nu(A\cup S)=\nu(A),\, \
\forall A\in{\mathcal U}.
\]
Specifically, under the condition $(*)$,
Greco proved that there is a nonnegative function $f$ such that
$$
\mu(A)=\int_A fd\nu,\, \ \forall A\in{\mathcal U}
$$
if and only if $(\mu, \nu)$ satisfies a strong decomposition property (S.D.P. for short, see \cite{Gra80}) w.r.t. a sets system $\{A_\alpha\}$ and $\lim\limits_{\alpha\to\infty}\mu(A_\alpha)=0$.
It is not difficult to see that S.D.P. together with $\{A_\alpha\}$ and $\lim\limits_{\alpha\to\infty}\mu(A_\alpha)=0$ implies
that $\mu\ll\nu$, hence the condition $(*)$ says in fact that $\nu$ is null-additive.

In contrast to these results, our result have no additional requirements for monotone measures other than a set of sufficient and necessary conditions.
Interestingly, Example \ref{Exa-withoutWeaklyNull-add.} shows that the Radon-Nikodym theorem can hold even for monotone measures without weakly null-additivity.
\end{remk}

\section{Radon-Nikodym theorem for $\sigma$-finite monotone measures}
Before presenting a Radon-Nikodym theorem for $\sigma$-finite monotone measures, we need some further properties of the Choquet integral.

Note that if $(\mu, \nu)$ has decomposition property w.r.t. a sets system $\{A_\alpha\}_{\alpha\in \mathbb{Q}^{+}}$, then for any nonempty set $V\in{\mathcal U}$
the ordered pair $(\mu|_V, \nu|_V)$ also has decomposition property w.r.t.
the system $\{A_\alpha\cap V\}_{\alpha\in \mathbb{Q}^{+}}$.

A monotone measure $\mu$ is said to be (i) \emph{lower continuous}
(or \emph{continuous from below}) if for any $\{A_n\}\subset{\mathcal U}$
with $A_n\nearrow A$, it holds that $\mu(A)=\lim\limits_{n\to\infty}\mu(A_n)$;
(ii) \emph{null-additive} if $\mu(A\cup N)=\mu(A)$ for any $A, N\in{\mathcal U}$ with $\mu(N)=0$.

Obviously, lower continuity implies null-continuity and null-additivity
implies weak null-continuity, but not vice versa (see \cite{Li2000}).

\begin{prot}\cite{Den94,SonLi05}\label{Prot-furtherpropofCho}
Let $(U, {\mathcal U}, \nu)$ be a monotone measure space and $f, g, f_n \ (n=1, 2,\cdots)$ be nonnegative measurable functions.

{\rm (i)} If $f=g \ a.e.[\nu]$  and $\nu$ is null-additive, then $\int fd\nu=\int gd\nu$.

{\rm (ii)} If $\nu$ is lower continuous and $f_n\nearrow f$, then $\lim\limits_{n\to\infty}\int f_nd\nu=\int fd\nu$.
\end{prot}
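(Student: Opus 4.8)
The plan is to treat the two parts separately, in each case pushing the equality (resp. the limit) down to the level sets $\{f\ge\alpha\}$ and then integrating in $\alpha$ according to the definition of the Choquet integral.

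For (i), put $N=\{f\ne g\}$; this is measurable with $\nu(N)=0$. Since $f=g$ on $N^{\rm c}$, for every $\alpha>0$ we have $\{f\ge\alpha\}\cap N^{\rm c}=\{g\ge\alpha\}\cap N^{\rm c}$, hence
$$\{f\ge\alpha\}\cup N=\big(\{f\ge\alpha\}\cap N^{\rm c}\big)\cup N=\big(\{g\ge\alpha\}\cap N^{\rm c}\big)\cup N=\{g\ge\alpha\}\cup N.$$
Null-additivity of $\nu$ now gives $\nu(\{f\ge\alpha\})=\nu(\{f\ge\alpha\}\cup N)=\nu(\{g\ge\alpha\}\cup N)=\nu(\{g\ge\alpha\})$ for all $\alpha>0$; integrating this identity over $(0,\infty)$ yields $\int f\,d\nu=\int g\,d\nu$.

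For (ii), fix $\alpha>0$. Since $f_n\nearrow f$ pointwise, the sets $\{f_n\ge\alpha\}$ increase with $n$ and $\{f>\alpha\}\subseteq\bigcup_{n}\{f_n\ge\alpha\}\subseteq\{f\ge\alpha\}$. Lower continuity of $\nu$ gives $\lim_{n}\nu(\{f_n\ge\alpha\})=\nu\big(\bigcup_n\{f_n\ge\alpha\}\big)$, and because the functions $\alpha\mapsto\nu(\{f_n\ge\alpha\})$ are nonnegative and nondecreasing in $n$, the monotone convergence theorem for the (improper Riemann) integral in $\alpha$ gives
$$\lim_{n\to\infty}\int f_n\,d\nu=\int_0^\infty\nu\Big(\bigcup_{n}\{f_n\ge\alpha\}\Big)\,d\alpha.$$
On the other hand $\int f_n\,d\nu\le\int f\,d\nu$ for every $n$ by monotonicity of the Choquet integral, while the inclusion $\{f>\alpha\}\subseteq\bigcup_n\{f_n\ge\alpha\}$ gives $\int_0^\infty\nu\big(\bigcup_n\{f_n\ge\alpha\}\big)\,d\alpha\ge\int_0^\infty\nu(\{f>\alpha\})\,d\alpha$. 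Since the nonincreasing function $\alpha\mapsto\nu(\{f\ge\alpha\})$ has at most countably many discontinuities and agrees with $\alpha\mapsto\nu(\{f>\alpha\})$ off this (Lebesgue-null) set, $\int_0^\infty\nu(\{f>\alpha\})\,d\alpha=\int f\,d\nu$, and combining the three estimates gives $\lim_n\int f_n\,d\nu=\int f\,d\nu$.

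The step I expect to require the most care is precisely this mismatch between strict and non-strict level sets in (ii): $\bigcup_n\{f_n\ge\alpha\}$ need not equal $\{f\ge\alpha\}$ (it can omit part of $\{f=\alpha\}$), so the union cannot be pushed through the Choquet integral directly, and one is forced to sandwich it between the two survival functions of $f$ and use that they have the same layer-cake integral. Part (i) and the remaining bookkeeping in (ii) are routine once lower continuity, null-additivity, and the monotone convergence theorem for the Riemann integral in the layer variable are in hand.
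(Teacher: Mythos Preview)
The paper does not supply its own proof of this proposition: it is stated with citations to \cite{Den94,SonLi05} and used as a known result, so there is nothing to compare against directly. Your argument, however, is correct and is essentially the standard one found in those references.

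A couple of minor remarks on (ii). First, your appeal to ``the monotone convergence theorem for the (improper Riemann) integral in $\alpha$'' is fine but deserves one extra word: since each $\alpha\mapsto\nu(\{f_n\ge\alpha\})$ and the limit $\alpha\mapsto\nu(\bigcup_n\{f_n\ge\alpha\})$ are nonincreasing, they are Borel measurable and their improper Riemann integrals coincide with Lebesgue integrals, so Lebesgue's MCT applies. Second, your justification that $\int_0^\infty\nu(\{f>\alpha\})\,d\alpha=\int_0^\infty\nu(\{f\ge\alpha\})\,d\alpha$ is exactly right: writing $F(\alpha)=\nu(\{f\ge\alpha\})$ and $G(\alpha)=\nu(\{f>\alpha\})$, one has $F(\beta)\le G(\alpha)\le F(\alpha)$ for all $\beta>\alpha$, so $G=F$ at every point of right-continuity of the nonincreasing function $F$, hence Lebesgue-a.e. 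This is the delicate step you flagged, and you handled it correctly. Part (i) is routine.
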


In the following we suppose that the monotone measures $\mu, \nu$ are $\sigma$-finite.
Without loss of generality, we can assume that
there is $\{U_n\}_{n=1}^\infty\subset{\mathcal U}$ with $U_n\nearrow U$ such that for every $n$, $\mu(U_n)<\infty$ and $\nu(U_n)<\infty$ hold simultaneously.

\begin{thrm}\label{thrm-sigmafinite}
Let $\mu, \nu$ be $\sigma$-finite and lower continuous and $\nu$ be null-additive.
If $(\mu, \nu)$ has decomposition property w.r.t. the system $\{A_\alpha\}_{\alpha\in \mathbb{Q}^{+}}$ and
$\lim\limits_{\alpha\to\infty}\mu(A_\alpha\cap U_n)\vee\nu(A_\alpha\cap U_n)=0$ for each $n$,
then there is a nonnegative and finite a.e.[$\nu$] measurable function $f$ such that
$$
\mu(A)=\int_A fd\nu, \ \ \forall\, A\in{\mathcal U}.
$$
In this case, $f$ is unique $a.e.[\nu]$.
\end{thrm}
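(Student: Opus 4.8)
The plan is to reduce to the finite case: Theorem~\ref{R-N-thrm-finitecase}, applied to the restrictions of $\mu,\nu$ to each $U_n$, yields a density on $U_n$; these densities are all restrictions of one global function, and lower continuity lets me glue the identity $\mu(A)=\int_A fd\nu$ from the $U_n$ up to all of $U$. Fix $n$. As observed just before Proposition~\ref{Prot-furtherpropofCho}, on the trace $\sigma$-algebra $\{A\in{\mathcal U}\mid A\subset U_n\}$ the pair $(\mu|_{U_n},\nu|_{U_n})$ has decomposition property w.r.t.\ $\{A_\alpha\cap U_n\}_{\alpha\in\mathbb{Q}^{+}}$; it is finite (recall $\mu(U_n)<\infty$, $\nu(U_n)<\infty$), and $\lim_{\alpha\to\infty}\mu(A_\alpha\cap U_n)\vee\nu(A_\alpha\cap U_n)=0$ by hypothesis, so Theorem~\ref{R-N-thrm-finitecase} applies. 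The density it produces is, by~(\ref{Eq-DecomFun}), the function $x\mapsto\sup\{\alpha\mid x\in A_\alpha\cap U_n\}$, which for $x\in U_n$ equals $\sup\{\alpha\mid x\in A_\alpha\}$; hence all these densities are restrictions of the single measurable function $f\colon U\to\overline{\mathbb{R}}_{+}$, $f(x)=\sup\{\alpha\in\mathbb{Q}^{+}\mid x\in A_\alpha\}$, and $\mu(A)=\int_A fd\nu$ for every $A\in{\mathcal U}$ with $A\subset U_n$ (on such $A$ the Choquet integral is the same whether computed with $\nu$ or $\nu|_{U_n}$).

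Next I extend this to arbitrary $A\in{\mathcal U}$. Since $A\cap U_n\nearrow A$, lower continuity of $\mu$ gives $\mu(A)=\lim_{n}\mu(A\cap U_n)=\lim_{n}\int_{A\cap U_n}fd\nu=\lim_{n}\int f\chi_{A\cap U_n}d\nu$; and since $f\chi_{A\cap U_n}\nearrow f\chi_A$ while $\nu$ is lower continuous, Proposition~\ref{Prot-furtherpropofCho}(ii) yields $\lim_{n}\int f\chi_{A\cap U_n}d\nu=\int f\chi_Ad\nu=\int_A fd\nu$, so $\mu(A)=\int_A fd\nu$. For finiteness $a.e.[\nu]$: $\{f=\infty\}\cap U_n\subset A_\beta\cap U_n$ for every $\beta$, hence $\nu(\{f=\infty\}\cap U_n)\le\inf_{\beta}\nu(A_\beta\cap U_n)=0$, and then $\{f=\infty\}\cap U_n\nearrow\{f=\infty\}$ together with lower continuity (hence null-continuity) of $\nu$ forces $\nu(\{f=\infty\})=0$.

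For uniqueness, let $g$ be another such density. On each $U_n$ we have $\int_A fd\nu=\mu(A)=\int_A gd\nu$ for all $A\subset U_n$; since $\nu|_{U_n}$ is weakly null-additive (from null-additivity of $\nu$) and null-continuous (from lower continuity of $\nu$), Proposition~\ref{Prot-uniqueness} gives $\nu(\{f\neq g\}\cap U_n)=0$, and letting $n\to\infty$ and using lower (null-)continuity of $\nu$ once more gives $\nu(\{f\neq g\})=0$. I do not expect a genuine obstacle here: the heavy lifting is done by Theorem~\ref{R-N-thrm-finitecase}, and the only points needing attention are the consistency of the finite-case densities across the $U_n$ — secured by the explicit formula~(\ref{Eq-DecomFun}) — and the passage of limits through the Choquet integral, which is exactly what lower continuity of $\mu$ and $\nu$ provides.
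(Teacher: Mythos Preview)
Your argument is correct, and the overall architecture --- reduce to the finite case on each $U_n$, then pass to the limit using lower continuity --- matches the paper's. The genuine difference lies in how you secure consistency of the local densities. The paper treats Theorem~\ref{R-N-thrm-finitecase} as a black box: it obtains some density $f_n$ on each $U_n$, invokes Proposition~\ref{Prot-uniqueness} to see that $f_n|_{U_m}=f_m$ only $a.e.[\nu]$ for $n>m$, and then uses null-additivity of $\nu$ (via Proposition~\ref{Prot-furtherpropofCho}(i)) to modify the $f_n$ so that they agree exactly and form an increasing sequence whose limit is the global $f$. You instead reach into the \emph{proof} of Theorem~\ref{R-N-thrm-finitecase} and use the explicit formula~(\ref{Eq-DecomFun}): since the density on $U_n$ is $x\mapsto\sup\{\alpha\mid x\in A_\alpha\cap U_n\}$, which for $x\in U_n$ coincides with the global $f(x)=\sup\{\alpha\mid x\in A_\alpha\}$, consistency is automatic and no patching is needed. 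Your route is shorter and, notably, shows that the existence part does not actually require null-additivity of $\nu$ --- only lower continuity of $\mu$ and $\nu$ is used --- whereas the paper's gluing step genuinely consumes null-additivity. The paper's approach, on the other hand, is more modular: it would survive unchanged if Theorem~\ref{R-N-thrm-finitecase} were later proved by a different construction not yielding~(\ref{Eq-DecomFun}).
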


\begin{proof}
For each $n$, $(\mu|_{U_n}, \nu|_{U_n})$ also has decomposition property w.r.t. the system $\{A_\alpha\cap U_n\}_{\alpha\in \mathbb{Q}^{+}}$.
Since $\lim\limits_{\alpha\to\infty}\mu(A_\alpha\cap U_n)\vee\nu(A_\alpha\cap U_n)=0$ also holds, according to Theorem \ref{R-N-thrm-finitecase},
there is a nonnegative measurable function $f_n$ on $U_n$ such that
\[
\mu|_{U_n}(E)=\int_E f_nd\nu|_{U_n}
\]
for each measurable subset $E$ of $U_n$. Equivalently, for each $A\in{\mathcal U}$ we have
\[
\mu|_{U_n}(A_n)=\int_{A_n} f_nd\nu|_{U_n},
\]
where $A_n=A\cap U_n$. By Proposition~\ref{Prot-uniqueness}, for $n>m$ we have
$f_n|_{U_m}=f_m \ a.e.[\nu]$.
Without loss of generality, by Proposition \ref{Prot-furtherpropofCho}(i) we can assume that $f_n|_{U_m}=f_m$ as $\nu$ is null-additive.
Let $\tilde{f}_n(u)=f_n(u)$ for $u\in U_n$ and $\tilde{f}_n(u)=0$ for $u\in U\setminus U_n$. Then
\[
\mu(A_n)=\int_{A_n} \tilde{f}_nd\nu
\]
holds for each $n$. Note that the sequence $\{\tilde{f}_n\}$ is nondecreasing and thus it is convergent everywhere. Denote $f=\lim\limits_{n\to\infty} \tilde{f}_n$, then $f|_{U_n}=\tilde{f}_n$ for each $n$. Thus
\[
\{f=\infty\}=\bigcup_{n=1}^\infty\Big(U_n\cap\{f=\infty\}\Big)=\bigcup_{n=1}^\infty\{\tilde{f}_n=\infty\},
\]
which implies that $f$ is finite a.e.[$\nu$] as $\nu$ is null-additive and lower continuous. Moreover, $f|_{U_n}=\tilde{f}_n$ also implies that
\[
\mu(A_n)=\int_{A_n} fd\nu.
\]
Since $A_n\nearrow A$, again by the lower continuity of $\mu, \nu$ we reach the final conclusion
\[
\mu(A)=\lim_{n\to\infty}\mu(A_n)=\lim_{n\to\infty}\int_{A_n} fd\nu=\int_{A} fd\nu.
\]

The uniqueness of $f$ follows from (i) of Proposition \ref{Prot-furtherpropofCho}.

This completes the proof.
\end{proof}

\begin{exa}
 Let $U$ be the set of natural numbers and ${\mathcal U}$ be the power set of $U$. Let $\nu(A)=1$ for $A\neq\emptyset$, $\mu(A)=\max A$ if $A$ is finite
 and $\mu(A)=\infty$ if $A$ is infinite. Then (i) $\nu$ is lower continuous and null-additive; (ii)$\mu$ is lower continuous;
 (iii) $\nu$ is finite and $\mu$ is $\sigma$-finite.
 Let $A_\alpha=[\alpha, \infty)\cap U$ for each $\alpha\in \mathbb{Q}^{+}$.
 Then we can verify that $(\mu, \nu)$ has decomposition property w.r.t. the system
 $\{A_\alpha\}_{\alpha\in \mathbb{Q}^{+}}$.
 It is easy to see that $\mu(A)=\int_A fd\nu, \forall\, A\in{\mathcal U}$ for $f(x)=\sup\{\alpha|x\in A_\alpha\}=x$.
 Note that $\lim\limits_{\alpha\to\infty}\mu(A_\alpha)=\infty$ and $\lim\limits_{\alpha\to\infty}\nu(A_\alpha)=1$. But $\lim\limits_{\alpha\to\infty}\mu(A_\alpha\cap U_n)\vee \nu(A_\alpha\cap U_n)=0$ for each $n$, where $U_n=\{0, 1, 2, \cdots, n\}$.
\end{exa}

\section{Concluding remarks}
We have presented a version of the Radon-Nikodym theorem for finite monotone measures
(Theorem~\ref{R-N-thrm-finitecase}).
As we have seen, we introduced the decomposition property of the ordered pair $(\mu,\nu)$
of monotone measures (Definition~\ref{Dfnt-decompro}) and showed a necessary and sufficient condition that the Radon-Nikodym theorem holds for the Choquet integral w.r.t. finite monotone measures.
We point out that our version has no additional conditions
for finite monotone measures (such as, subadditivity, or $\sigma$-subadditivity,
or continuity from below, etc.) other than monotonicity.
The proof of this result is dependent on a distinguished feature, namely, the comonotonic additivity, of the Choquet integral.
The uniqueness of Radon-Nikodym derivative and the case of $\sigma$-finite monotone measures have also been considered.

 Apart from the Choquet integral, there are other nonlinear integrals (the concave integral \cite{LehTep08} and pan-integral \cite{WanKli09} for example)
 in the literature that extend the Lebesgue integral. So, it would be an interesting topic to explore the Radon-Nikodym theorem for these integrals.
 As these integrals lack the comonotonic additivity, we need to seek new decomposition properties and techniques. The known relationships among these integrals
 \cite{LehTep08,LiMesOuyWu23,OuyLiMes15} may be useful.

\end{document}